  \def\swappedhead@plain#1#2#3{%
    \thmnumber{(\textup{#2})}
    \thmname{\@ifnotempty{#2}{~}\textup{#1}}
    \thmnote{ {\textup{(#3)}}}}
  \let\swappedhead\swappedhead@plain
\theoremstyle{plain}
\newtheorem{theo}[equation]{Theorem}
\newtheorem{prop}[equation]{Proposition}
\newtheorem{algo}[equation]{Algorithm}
\theoremstyle{definition}
\newtheorem{defi}[equation]{Definition}
\newtheorem{exem}[equation]{Example}
\newtheorem{exems}[equation]{Examples}
\newtheorem{rema}[equation]{Remark}
\newtheorem{remas}[equation]{Remarks}
\numberwithin{equation}{section}
\let\polishl\l
  \newcommand\CC{{\mathbf{C}}}         
  \newcommand\F{{\mathscr F}}          
  \newcommand\RR{{\mathbf{R}}}         
\renewcommand\o{^{\smash{\mathrm o}}}  
  \newcommand\Lie{\mathfrak}
  \newcommand\LA{{\Lie{a}}}            
  \newcommand\LB{{\Lie{b}}}            
  \newcommand\LD{{\Lie{d}}}            
  \newcommand\LE{{\Lie{e}}}            
  \newcommand\LF{{\Lie{f}}}            
  \newcommand\LG{{\Lie{g}}}            
  \newcommand\LH{{\Lie{h}}}            
  \newcommand\LK{{\Lie{n}_c}}            
  \newcommand\LN{{\Lie{n}}}            
  \newcommand\LQ{{\Lie{q}}}            
  \newcommand\LS{{\Lie{s}}}            
  \newcommand\LU{{\Lie{u}}}            
  \DeclareMathOperator\ad{ad}          
  \DeclareMathOperator\ann{ann}        
  \DeclareMathOperator\drag{drag}      
\renewcommand\d{{\delta}}              
  \DeclareMathOperator\Diff{Diff}      
  \newcommand\e[1]{{\mathrm e^{#1}}}   
  \let\Imaginary\Im
  \let\Im\relax
  \DeclareMathOperator\Im{Im}          
  \DeclareMathOperator\Ind{Ind}        
  \newcommand\ind{_{\textup{ind}}}
  \newcommand\inv{^{-1}}               
  \DeclareMathOperator\Isom{Isom}      
  \DeclareMathOperator\Ker{Ker}        
  \newcommand\orth{^{\smash{\scriptscriptstyle\perp}}}
  \let\Real\Re
  \let\Re\relax
  \DeclareMathOperator\Re{Re}          
  \DeclareMathOperator\Tr{Tr}          
  \newcommand\suba{_{\smash{|}\LA}}
  \newcommand\subh{_{\smash{|}\LH}}
  \newcommand\subk{_{\smash{|}\LK}}
  \newcommand\subn{_{\smash{|}\LN}}
  \newcommand\subs{_{\smash{|}\LS}}
  \newcommand\subu{_{\smash{|}\LU}}
  \newcommand{\<}{\langle}
\renewcommand{\>}{\rangle}
  \DeclareFontFamily{OMX}{MnSymbolE}{}  
  \DeclareFontShape{OMX}{MnSymbolE}{m}{n}{
      <-6>  [0.85] MnSymbolE5
     <6-7>  [0.85] MnSymbolE6
     <7-8>  [0.85] MnSymbolE7
     <8-9>  [0.85] MnSymbolE-Bold8
     <9-10> [0.85] MnSymbolE-Bold9
    <10-12> [0.85] MnSymbolE-Bold10
    <12->   [0.85] MnSymbolE12}{}
\renewcommand\iff{\Leftrightarrow}
  \newcommand\sub[1]{_{\smash{\raisebox{1pt}{$\scriptstyle #1$}}}}
  \newcommand\x{\times}
\title[Symplectic Mackey Theory]{Symplectic Mackey Theory}
\author{François Ziegler}
\email{fziegler@georgiasouthern.edu}
\address{Department of Mathematical Sciences, Georgia Southern University, Statesboro, GA 30460-8093, USA}
\date{\today}
\subjclass[2010]{53D20, 17B08, 22D30, 57S20}
\begin{document}

\begin{abstract}
   Many years ago Kazhdan, Kostant and Sternberg defined the notion of inducing a hamiltonian action from a Lie subgroup. In this paper, we develop the attendant imprimitivity theorem and Mackey analysis in the full generality needed to deal with arbitrary closed normal subgroups. 
\end{abstract}

\maketitle

\setcounter{tocdepth}{1}
\tableofcontents

\section*{Introduction}

The notion of an \emph{induced object}, characterized by a \emph{system of imprimitivity}, has its origins in Frobenius's early work on finite group representations (see \cite{Clifford:1937a}). It has since swarmed out across many disciplines: representations of locally compact groups \cite{Mackey:1949}, Fell bundles \cite{Takesaki:1967,Fell:1969a,Green:1978,Kaliszewski:2013}, Lie algebras \cite{Blattner:1969}, $C^*$\nobreakdash-algebras \cite{Rieffel:1974}, rings \cite{Rieffel:1975}, Hopf algebras \cite{Koppinen:1977}, ergodic actions of Lie groups \cite{Zimmer:1978}, algebraic groups \cite{Cline:1983}, hypergroups \cite{Colojoara:1988}, and quantum groups \cite{Vaes:2005}. Wherever it makes sense, the main application is a version of the \emph{normal subgroup analysis} of Clifford, Mackey, Blattner, and Fell \cite{Clifford:1937a,Mackey:1958,Blattner:1965,Fell:1969a}.

So it has been expected, ever since Kazhdan, Kostant and Sternberg \cite{Kazhdan:1978} defined inducing a hamiltonian action from a subgroup, that an attendant imprimitivity theorem and normal subgroup analysis should exist. In fact \cite[p.\,1]{Kostant:1968} already mentions, among eight projects, that of symplectically understanding ``Mackey's theory for the case of semi-direct products''. Now, one might wonder what the point is to mimic this theory in symplectic geometry: didn't Kirillov precisely \emph{integrate it out} of representation theory, with his direct parametrization of the unitary dual by coadjoint orbits? Our argument here is that, as Duflo \cite{Duflo:1981} points out, 
\begin{quote}\small
	in practice, the computation of $\LN^*/N$ is usually done by an inductive procedure parallel to Mackey's inductive procedure. This is even the heart of Kirillov's proof.
\end{quote}
The goal of this paper is to spell out that parallel theory, in the full generality needed to deal with arbitrary normal subgroups. In particular, to ensure recursive applicability of the resulting ``Mackey machine'', where subgoups typically arise as stabilizers, it is essential that we avoid any connectivity assumptions on our groups and orbits; this causes subtleties which, in the last (``Mackey obstruction'') step, were overcome only recently \cite{Iglesias-Zemmour:2015}.

Chapter I presents the imprimitivity theorem of \cite{Ziegler:1996} \eqref{imprimitivity_theorem}. This was already given an exposition in \cite[pp.\,332--336, 472]{Landsman:1998}, where Landsman noted that \emph{if $G$ and $H$ are connected and simply connected}, it can be deduced from the Morita equivalence theory of \cites[Thm 3.3]{Xu:1991}[p.\,322]{Landsman:1998}, much like Rieffel \cite[§7]{Rieffel:1974} deduced Mackey's theorem from an ``abstract'' version. While some of Xu's connectivity conditions can be removed \cites{Landsman:1998}[§8]{Landsman:2006}, the proof we shall give appears to be the only one valid unconditionally. In addition we prove a symplectic analog \eqref{intertwining} of Mackey's theorem on intertwining operators, and in \eqref{homogeneous_case} we characterize systems of imprimitivity on homogeneous hamiltonian $G$-spaces as all arising from \emph{Pukánszky coisotropic subalgebras}. The key role of such subalgebras was first brought out in \cite{Duflo:1982a} and, in relation with symplectic induction, \cite{Duval:1992}. 

Chapter II builds the symplectic Mackey machine, in a parallel with the three representation-theoretic steps well described in \cite[{}XII.1.28]{Fell:1988b}: \begin{itemize}[--]
	\item Given a closed normal subgroup $N\subset G$, Step 1 is just the observation that homogeneous hamiltonian $G$-spaces sit above $G$-orbits in $\LN^*/N$ \eqref{little_group}.
	\item Given such an orbit $\mathcal U=G(U)$, Step 2 uses the inducing construction and the imprimitivity theorem to classify $G$-spaces $X$ over $\mathcal U$ in terms of $G_U$-spaces $Y$ sitting above $U$ alone \eqref{induction_step}.
	\item Given $U=N(c)$, Step 3 uses a twisted product construction and the barycentric decomposition theorem of \cite{Iglesias-Zemmour:2015} to classify $G_U$\nobreakdash-spaces $Y$ over $U$ in terms of $G_c$-spaces $V$ sitting above $c_{|\LN_c}$ \eqref{obstruction_step}. 
\end{itemize}
Equivalently, the spaces $V$ in the last step are arbitrary homogeneous hamiltonian $(G_c/N\sub c\o, [-\theta])$-spaces, where $[\theta]$ is the \emph{symplectic Mackey obstruction} of $U$. We note that these results were anticipated by Lisiecki \cite{Lisiecki:1992} in the case where $G$ is nilpotent, connected and simply connected.

Chapter III illustrates the theory with three simple applications. In \eqref{Mackey_Wigner} we spell out the case of a connected \emph{abelian} normal subgroup, in which Step 3 above is essentially unnecessary. In the semidirect product case the result was known to Guillemin and Sternberg \cite[Thm 4.1]{Guillemin:1983} and in another form (before the invention of symplectic induction) to Rawnsley \cite[Prop.~1]{Rawnsley:1975}; in general it is clearly anticipated in Kirillov \cite[Lem.~4 \&~5]{Kirillov:1968}. In \eqref{Kirillov_Bernat} we use the Mackey machine inductively to show that coadjoint orbits of exponential groups are all ``monomial'', i.e.~induced from point-orbits. This result can be viewed as an independent proof of the existence of Pukánszky polarizations. Finally \eqref{parabolic_induction} applies the imprimitivity theorem to exhibit the coadjoint orbits of reductive groups as always ``parabolically induced''. When the parabolic is \emph{minimal} (Borel subgroup) this result goes back again to Guillemin and Sternberg \cite[Thm 3.1]{Guillemin:1983}.

\specialsection*{\textbf{Chapter I: The Imprimitivity Theorem}}

\section{Symplectic Preliminaries}

\subsection{Notation for group actions}

Whenever $G$ is a Lie group, we write $G\o$ for its identity component and  reserve the corresponding german letter for its Lie algebra, $\LG$. If $G$ acts on a manifold $X$, so that we have a morphism $g\mapsto g_{\,X}$ of $G$ into the diffeomorphisms of $X$ (with $g_{\,X}(x)$ a smooth function of the pair $(g,x)$), we define the corresponding infinitesimal action
\begin{equation}
	Z\mapsto Z_X,\qquad\qquad \LG\to\textrm{vector fields on $X$}
\end{equation}
by $Z_X(x) = \left.\frac{d}{dt}\exp(tZ)_X(x)\right|_{t=0}$. This is a Lie algebra morphism, if we define the bracket of vector fields with minus its usual sign. Whenever possible, we drop the subscripts to write $g(x)$ and $Z(x)$ instead of $g_{\,X}(x)$ and $Z_X(x)$. We use
\begin{equation}
	G(x),\qquad \LG(x),\qquad G_x,\qquad \LG_x,
\end{equation}
to denote respectively the $G$-orbit of $x$, its tangent space at $x$, the stabilizer of $x$ in $G$, and the stabilizer of $x$ in $\LG$. Finally it will be convenient to have a concise notation for the translation of tangent and cotangent vectors to $G$. Thus for fixed $g,q\in G$ we will let
\begin{equation}
   \label{lifted_actions}
	\gathered T_qG             \\\vspace{-3pt}       v       \endgathered
	\gathered  \,\to\,        \\\vspace{-3pt}   \,\mapsto\,  \endgathered
	\gathered T_{gq}G         \\\vspace{-3pt}      gv,       \endgathered
	\quad\qquad\text{resp.}\qquad\quad
	\gathered T^*_qG         \\\vspace{-3pt}       p         \endgathered
	\gathered  \,\to\,        \\\vspace{-3pt}  \,\mapsto\,   \endgathered
	\gathered T^*_{gq}G     \\\vspace{-3pt}      gp          \endgathered
\end{equation}
denote the derivative of $q\mapsto gq$, respectively its contragredient so that $\<gp,v\>=\<p,g^{-1}v\>$. Likewise we define $vg$ and $pg$ with $\<pg,v\>=\<p,vg^{-1}\>$. With this understood, the coadjoint action on $\LG^*=T^*_eG$ is $g(m)=gmg\inv$; infinitesimally it gives $Z(m)=\<m,[\,\cdot\,,Z]\>$. 

\subsection{Hamiltonian $G$-spaces}

Let $(X,\sigma)$ be a symplectic manifold. To each $f\in C^\infty(X)$ we attach the vector field $\drag f$ defined by $\sigma(\drag f,\cdot) = -df$ (`symplectic gradient'); $\drag$ is a Lie algebra morphism if we endow ${\text C}^\infty(X)$ with \emph{Poisson bracket}:
\begin{equation}
   \label{Poisson_bracket}
	\{f,f'\} = \sigma(\drag f',\drag f).
\end{equation}
The $\sigma$-preserving action of a Lie group $G$ on $X$ is called \emph{hamiltonian} if there is a \emph{moment map}, $\Phi:X\to\LG^*$, such that $Z_X=\drag\<\Phi(\cdot),Z\>$. If further $\Phi$ is $G$-equivariant, then $Z\mapsto \<\Phi(\cdot),Z\>$ is a Lie algebra morphism and the triple $(X,\sigma,\Phi)$ is called a \emph{hamiltonian $G$-space}. The notion of isomorphic hamiltonian $G$-spaces is clear:
\begin{equation}
   \Isom_G(X_1,X_2)
\end{equation}
will denote the set of all $G$-equivariant diffeomorphisms $X_1\to X_2$ which transform $\sigma_1$ into $\sigma_2$ and $\Phi_1$ into $\Phi_2$.

\begin{exems}
   \label{T*Q_KKS}
	(a) If $G$ acts on a manifold $Q$, we get an action on $X=T^*Q$ which preserves the canonical 1-form $\theta=$ ``$\<p,dq\>$'' and $\sigma=d\theta$. The relation $\iota(Z_X)d\theta + d\iota(Z_X)\theta=0$ then shows that a moment map $\Phi:X\to\LG^*$, which one can check is $G$-equivariant, is given by $\<\Phi(\cdot),Z\>=\iota(Z_X)\theta$, i.e.
	\begin{equation}
	   \label{moment_T*Q}
	   \<\Phi(p),Z\> = \<p,Z(q)\>,
	   \qquad
	   p\in T^*_qQ.
	\end{equation}
	(b) If $M$ is an orbit in $\LG^*$ for the coadjoint action of $G$, then the 2-form defined on it by $\sigma_{KKS}(Z(m),Z'(m)) =\<Z(m),Z'\>$ makes $(M,\sigma_{KKS},M\hookrightarrow\LG^*)$~into a homogeneous hamiltonian
	$G$-space. Conversely, every such space covers a coadjoint orbit:
\end{exems}

\begin{theo}[Kirillov-Kostant-Souriau]
   \label{KKS_theorem}
	Let $(X,\sigma,\Phi)$ be a hamiltonian $G$-space\textup, and suppose that $G$ acts transitively on $X$\textup. Then $\Phi$ is a symplectic covering of its image\textup, which is a coadjoint orbit of $G$.
\end{theo}

By \emph{symplectic} covering we mean of course that $\Phi$ pulls the orbit's 2-form back to $\sigma$. That $\Phi$ is a covering follows from the first of two informative properties, valid for any moment map:
\begin{equation}
   \label{Ker_Im_DPhi(x)}
   \Ker(D\Phi(x)) = \LG(x)^\sigma,
   \qquad\qquad
   \Im(D\Phi(x)) = \ann(\LG_x).
\end{equation}
Here the superscript means orthogonal subspace relative to $\sigma$, and if $(\cdot)$ is a subset of either $\LG$ or $\LG^*$, $\ann(\cdot)$ denotes its annihilator in the other.

\section{Symplectic Induction}
\label{Symplectic_induction}

\subsection{The Kazhdan-Kostant-Sternberg construction \cite{Kazhdan:1978}}
\label{KKS}
Given a closed subgroup $H$ of $G$ and a hamiltonian $H$-space $(Y,\tau,\Psi)$, this construction produces a hamiltonian $G$-space $(\smash{\Ind_H^GY},\sigma\ind,\Phi\ind)$ as follows. (We use the notation \eqref{lifted_actions}.)

First endow $N=T^*G\x Y$ with the symplectic form $\omega=d\theta+\tau$, where $\theta$ is the canonical 1-form on $T^*G$, and let $H$ act on $N$ by $h(p,y)=(ph\inv,h(y))$. This is hamiltonian, with moment map $\psi$:
\begin{equation}
   \label{psi}
	\psi(p,y) = \Psi(y) - q\inv p\subh
\end{equation}
for $p\in T^*_qG$. Here the second term denotes the restriction to $\LH$ of $q\inv p\in\LG^*$; it comes from \eqref{moment_T*Q} with $Z(q)=-qZ$. The induced manifold is now defined as the Marsden-Weinstein reduction of $N$ at zero \cite{Marsden:1974}, i.e.,
\begin{equation}
   \label{Ind_H^GY}
	\Ind_H^GY:=\psi\inv(0)/H.
\end{equation}
In more detail: the action of $H$ is free and proper (because it is free and proper on the factor $T^*G$, where it is the right action of $H$ regarded as a subgroup of $T^*G$ \cite[§III.1.6]{Bourbaki:1972}); so $\psi$ is a submersion (\ref{Ker_Im_DPhi(x)}b), $\psi\inv(0)$ is a submanifold, and \eqref{Ind_H^GY} is a manifold; moreover $\omega_{|\psi\inv(0)}$ degenerates exactly along the $H$-orbits (\ref{Ker_Im_DPhi(x)}a), so it comes from a uniquely defined symplectic form, $\sigma\ind$, on the quotient.

To make \eqref{Ind_H^GY} into a $G$-space, we let $G$ act on $N$ by $g(p,y)=(gp,y)$. This action commutes with the previous $H$-action and preserves $\psi\inv(0)$. Moreover it is hamiltonian and its moment map $\varphi:N\to\LG^*$, given by \eqref{moment_T*Q} with now $Z(q)=Zq$:
\begin{equation}
   \label{phi}
	\varphi(p,y) = pq\inv,      
	\qquad
	p\in T^*_qG,
\end{equation}
is constant on each $H$-orbit. Passing to the quotient, we obtain the required $G$-action on $\Ind_H^GY$ and moment map $\Phi\ind:\Ind_H^GY\to\LG^*$.

\subsection{Elementary properties}
The following includes a version of \emph{Frobenius reciprocity} (b) and the \emph{stages theorem} (e). In (c,d) we say that a hamiltonian $G$\nobreakdash-space \emph{is homogeneous} if $G$ acts transitively on it, and \emph{is a coadjoint orbit} if further the covering \eqref{KKS_theorem} is injective.

\begin{prop} 
   \label{elementary}
   \ 
   \begin{enumerate}[\upshape(a)]
      \item $\dim(\Ind_H^GY) = 2\dim(G/H) + \dim(Y)$.
      \item A coadjoint orbit $M$ of $G$ intersects $\Im(\Phi\ind)\iff\smash{M\subh}$ intersects $\Im(\Psi)$.
      \item If $\Ind_H^GY$ is homogeneous\textup, then $Y$ is homogeneous.
      \item If $\Ind_H^GY$ is a coadjoint orbit\textup, then $Y$ is a coadjoint orbit.
      \item If $K$ is an intermediate closed subgroup\textup, then $\Ind_K^G\Ind_H^KY=\Ind_H^GY$.
   \end{enumerate}
\end{prop}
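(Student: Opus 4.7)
The plan is to deduce (a)--(d) directly from the explicit reduction construction \eqref{Ind_H^GY} in a few lines each, and to settle the stages theorem (e) by performing a single reduction of an auxiliary triple product by a pair of commuting actions in two different orders.

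For (a), formula \eqref{Ker_Im_DPhi(x)} applied to $\psi$ shows that $\psi$ is a submersion, whence $\psi\inv(0)$ has codimension $\dim H$ in $T^*G\x Y$, and the free $H$-quotient drops another $\dim H$. For (b), one rewrites $\psi(p,y)=0$ as $\Psi(y)=q\inv p\subh$ and $\Phi\ind([p,y])=pq\inv$: since $q\inv p=q\inv(pq\inv)q$ lies in $M$ whenever $pq\inv$ does, the restriction $\Psi(y)$ lies in $M\subh$; and conversely, any $m\in M$ with $m\subh\in\Im\Psi$ lifts to a point $(m,y)\in\LG^*\x Y$ of $\psi\inv(0)$ whose $\Phi\ind$-image is $m$. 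For (c) and (d), any $y\in Y$ lifts to $(m,y)\in\psi\inv(0)$ on picking $m\in\LG^*$ with $m\subh=\Psi(y)$. If $G$ acts transitively on $\Ind_H^GY$, two such lifts $(m_i,y_i)$ are then related by some $(g,h)\in G\x H$ with in particular $h(y_1)=y_2$, proving (c); if $\Phi\ind$ is in addition injective, then taking $m_1=m_2=m$ forces $mh\inv=m$, hence $h=e$ by freeness of the right $H$-action on $T^*G$, whence $y_1=y_2$, which is (d).

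The substantive step is (e). My plan is to introduce the triple product $N=T^*G\x T^*K\x Y$, writing its points $(P,p,y)$ with $P\in T^*_QG$ and $p\in T^*_qK$, and to equip it with the two commuting hamiltonian actions
\[
   h(P,p,y)=(P,ph\inv,h(y)), \qquad k(P,p,y)=(Pk\inv,kp,y),
\]
whose moment maps $\mu_H(P,p,y)=\Psi(y)-q\inv p\subh$ and $\mu_K(P,p,y)=pq\inv-Q\inv P|_{\LK}$ are read off from \eqref{psi} and \eqref{phi}. Reducing first by $H$ at zero visibly yields $T^*G\x\Ind_H^KY$, and reducing this further by the residual $K$-action gives $\Ind_K^G\Ind_H^KY$. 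Reducing first by $K$ should instead collapse the first two factors: normalising the $T^*K$-base point to $e$ via the $K$-action (possible by freeness of the right $K$-action on $T^*G$) should identify the zero-reduction of $T^*G\x T^*K$ with $T^*G$, and hence that of $N$ with $T^*G\x Y$, in a way that carries the residual $H$-structure to the one that defines $\Ind_H^GY$. A further reduction by $H$ then yields $\Ind_H^GY$, and (e) follows from commutativity of reduction by commuting hamiltonian actions.

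The main obstacle is the verification that this identification of the $K$-reduction of $N$ with $T^*G\x Y$ is $H$-equivariant and intertwines the two $H$-moment maps; concretely, one has to check that the action $p\mapsto ph\inv$ on the $T^*K$-factor descends, after normalisation, to the familiar $P\mapsto Ph\inv$ on the $T^*G$-factor, which reduces to the basic identity $(hph\inv)h=hp$ for $p\in T^*_eK$ and $h\in K$. Once this is in hand, the commuting-reductions theorem concludes the argument.
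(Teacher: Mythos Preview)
Your proof is correct and coincides with the paper's. Parts (a)--(d) match almost verbatim; for (e) the paper likewise starts from $T^*G\times T^*K\times Y$ with its commuting $K\times H$-action and simply writes down the map sending the $K\times H$-orbit of $(P,p,y)$ (with $p\in T^*_qK$) to the $H$-orbit of $(Pq,y)$, declaring the verification ``not hard''---your reduction-in-stages framing is exactly the natural way to discover and check this map.
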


\begin{proof}
   (a): \eqref{Ind_H^GY} has dimension $\dim(N) - \dim(\LH^*) - \dim(H)$ because $\psi$ is a submersion and $H$ acts freely.
   (b): This reexpresses $\Im(\Phi\ind)=\varphi(\psi\inv(0))$.
   (c): Assume $G$ is transitive on $\smash{\Ind_H^GY}$, and let $y_1, y_2\in Y$. Pick $m_i\in\LG^*$ such that $\Psi(y_i)=\smash{m_{i|\LH}}$. Then the $H$-orbits $x_i=H(m_i,y_i)$ are points in \eqref{Ind_H^GY}. So transitivity says that $x_1=g(x_2)$, i.e.
   \begin{equation}
      \label{transitivity}
	   (m_1,y_1)=(gm_2h\inv,h(y_2))
	   \quad\text{for some }
	   h\in H.
   \end{equation}
   In particular $y_1=h(y_2)$, as claimed.
   (d): Assume further that $\Phi\ind$ is injective, and that $\Psi(y_1)=\Psi(y_2)$. Then we can pick $m_1=m_2$ above. Since $\Phi\ind(x_i)=m_i$ it follows by injectivity that $x_1=x_2$, i.e. we have \eqref{transitivity} with $g=e$. But then $h=e$ and hence $y_1=y_2$, as claimed.
   (e): The left-hand side is by construction a space of $K\x H$-orbits within $T^*G\x T^*K\x Y$, and it is not hard to verify that an isomorphism from left to right is obtained by sending the $K\x H$-orbit of $(p,p',y)$, $p'\in T^*_{q'}K$, to the $H$-orbit of $(pq',y)$.
\end{proof}

\section{Symplectic Imprimitivity}

\subsection{Systems of imprimitivity}
Let $(X,\sigma,\Phi)$ be a hamiltonian $G$-space. The natural action of $G$ on  $C^\infty(X)$ will be denoted: $g(f)=f(g\inv(\cdot))$; it preserves the Poisson bracket \eqref{Poisson_bracket}.

\begin{defi}
   \label{imprimitivity}
   A \emph{\textbf{system of imprimitivity}} on $X$ is a $G$-invariant, Poisson commutative subalgebra $\LF$ of $C^\infty(X)$, such that $\drag f$ is complete for each $f\in\LF$.
\end{defi}

Given such a system, we write $\LF^*$ for its algebraic dual, and $\F$ for $\LF$ viewed as an additive group. Then $G$ acts on $\LF^*$ by contragredience, and $\F$ acts on $X$ by exponentiation of the fields $\drag f$. Although $\F$ is not usually a Lie group, we can still write $\F(x)$ for the $\F$-orbit of $x$, $\LF(x) = \{(\drag f)(x):f\in\LF\}$, and regard as \emph{moment} of this action the map
\begin{equation}
   \label{moment_pi}
   \pi:X\to\LF^*,
   \qquad\qquad
   \<\pi(x),f\> = f(x).
\end{equation}
The set $B = \pi(X)$ will be referred to as the \emph{base} of $\LF$. Since \eqref{moment_pi} is clearly $G$-equivariant, $B$ is in general a union of $G$-orbits. Whence:

\begin{defi}
   \label{transitive}
   The system of imprimitivity $\LF$ is \emph{\textbf{transitive}} if
   \begin{enumerate}[(i)]
      \item the $G$-action on its base $B=\pi(X)$ is transitive;
      \item $\pi:X\to B$ is $C^\infty$ for the $G$-homogeneous manifold structure of $B$. 
   \end{enumerate}
\end{defi}

\begin{remas}
   (a) Such is for instance automatically the case if $G$ is transitive on $X$ itself. We do not know if condition (i) might always imply (ii).
   
   (b) The manifold structure in (ii) is well-defined, for the stabilizer $G_b$ of any point $b=\pi(x)$ of $B$ is \emph{closed}. Indeed, $g\in G_b$ means that one has $\<g(b),f\>=\<b,f\>$, i.e. $f(g(x)) = f(x)$, for each $f\in\LF$; and this condition is closed by continuity of the maps $g\mapsto f(g(x))$.
   
   \label{same_base}
   (c) We will still call \emph{base of} $\LF$ any $G$-set with a fixed $G$-equivariant bijection onto $B$. This lets us speak of systems of imprimitivity having the \emph{same base}.
\end{remas}

\subsection{The system of imprimitivity attached to an induced manifold}
If $X=\smash{\Ind_H^GY}$ (§\ref{KKS}), then one obtains a $G$-equivariant projection
\begin{equation}
   \label{pi_ind}
   \pi\ind:\Ind_H^GY\to G/H
\end{equation}
by observing that the map $T^*G\x Y\to G/H$ which sends $T^*_qG\x Y$ to $qH$ is constant on $H$-orbits and hence passes to the quotient \eqref{Ind_H^GY}. There results a canonical sytem of imprimitivity on $X$:

\begin{prop}
   \label{f_ind}
   If $X = \Ind_H^GY$\textup, then $\LF\ind:=\pi\ind^*(C^\infty(G/H))$ is a transitive system of imprimitivity on $X$ with base $G/H$.
\end{prop}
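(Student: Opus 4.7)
My plan is to verify, in order, the clauses of Definitions~\ref{imprimitivity} and~\ref{transitive}. The algebra and $G$\nobreakdash-invariance conditions are formal: $\pi\ind^*$ is a ring homomorphism, and the $G$-equivariance of $\pi\ind$ gives $g(\pi\ind^*\varphi)=\pi\ind^*(g(\varphi))\in\LF\ind$ for each $\varphi\in C^\infty(G/H)$.

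The core computation is Poisson commutativity, which I would carry out upstairs on $N=T^*G\x Y$. Each $f=\pi\ind^*\varphi\in\LF\ind$ lifts to the $H$-invariant function $\tilde f(p,y)=\varphi(qH)$ on $N$ (for $p\in T^*_qG$), factoring through the successive projections $N\to T^*G\to G\to G/H$. On $T^*G$ with form $d\theta$, the algebra of functions pulled back from $G$ is Poisson-commutative because their Hamiltonian fields are fiberwise (vertical) translations; since $\omega=d\theta+\tau$ splits and $\tilde f$ has no $y$-dependence, the same holds on $N$. Hence $\{\tilde f,\tilde f'\}=0$ on $N$, whence $\{f,f'\}=0$ on $X=\psi\inv(0)/H$ by the standard Poisson property of Marsden-Weinstein reduction.

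For completeness, I would integrate $\drag\tilde f$ explicitly. In the trivialization $T_qG\simeq q\LG$ the canonical form of $T^*G$ makes $\drag\tilde f$ the fiberwise translation $(p,y)\mapsto(0,-\alpha_q,0)$, where $\alpha_q\in T^*_qG$ is the pullback of $d\varphi_{qH}$ along $q\mapsto qH$. Its flow $(p,y)\mapsto(p-t\alpha_q,y)$ is defined for all $t$; since $\tilde f$ is $H$-invariant the flow automatically preserves $\psi\inv(0)$ (or directly: $\alpha_q$ annihilates $q\LH=\Ker D\pi_{G/H}(q)$, so $q\inv\alpha_q\subh=0$ and $\psi(p,y)=\Psi(y)-q\inv p\subh$ is unchanged). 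Passing to the $H$-quotient yields a complete flow on $X$ generating $\drag f$.

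Finally, for transitivity, I would observe that $\pi\ind$ is surjective: given $qH\in G/H$ and any $y\in Y$, pick $p\in T^*_qG$ with $q\inv p\subh=\Psi(y)$ (possible by surjectivity of $\LG^*\to\LH^*$). The defining relation $\<\pi(x),\pi\ind^*\varphi\>=\varphi(\pi\ind(x))$ from~\eqref{moment_pi} then factors the moment $\pi$ through a $G$-equivariant bijection $G/H\leftrightarrow B=\pi(X)$. With this identification, condition~(i) is the obvious transitivity of $G$ on $G/H$, and (ii) is the smoothness of $\pi\ind$ by construction. The main technical delicacy is ensuring that the commutativity and completeness computations performed on $N$ descend faithfully to the reduction, which works because all relevant functions and flows are manifestly $H$-invariant and tangent to $\psi\inv(0)$.
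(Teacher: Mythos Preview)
Your proof is correct and follows essentially the same approach as the paper: lift to $N=T^*G\times Y$, compute the Hamiltonian flow explicitly as the fiberwise translation $(p,y)\mapsto(p-tDf(q),y)$, and descend both completeness and Poisson commutativity through the Marsden--Weinstein quotient. The only cosmetic differences are that the paper derives commutativity from the explicit flow (noting $f'$ is constant along it) rather than from the general fact about base-pulled functions on a cotangent bundle, and is terser about the identification of the base with $G/H$, which you spell out more carefully.
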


\begin{proof}
   Let $f\in C^\infty(G/H)$, and write also $f$ for its pull-back to $X$ via \eqref{pi_ind}, to $G$, or to $T^*G\x Y$. Then its symplectic gradient $\drag f$ on the latter space is the vector field $\eta$ with flow
   \begin{equation}
      \label{flow_eta}
      \e{t\eta}(p,y) = (p-tDf(q), y)
   \end{equation}
   ($p\in T^*_qG$). Indeed, deriving \eqref{flow_eta} at $t=0$ in a standard chart $(p_i,q_i)$ of $T^*G$ one obtains in coordinates $\eta = (\d p_i,\d q_i, \d y)=(-\partial f/\partial q_i,0,0)$, whence
   \begin{equation}
      \omega(\eta,\cdot)=\d p_idq_i-\d q_idp_i+\tau(\d y,\cdot)= - df,
   \end{equation}
   i.e.~$\eta = \drag f$. The flow \eqref{flow_eta} is complete; since on the other hand $f$ is $H$-invariant, one knows \cite[Cor. 3]{Marsden:1974} that this flow passes to the quotient $X=\psi\inv(0)/H$ \eqref{Ind_H^GY}, where it is again the flow of $\drag f$ (computed on $X$). Therefore the latter is also complete. If further $f'$ is another function from $G/H$, one sees on \eqref{flow_eta} that is is constant along the flow, so that $\{f,f'\}=0$. Finally the equivariance of \eqref{pi_ind} shows that these functions constitute a $G$\nobreakdash-invariant space. So we have indeed a system of imprimitivity, whose base $B$ identifies with $G/H$ in the obvious manner.
\end{proof}

\subsection{The imprimitivity theorem}
Mackey's theorem \cites{Mackey:1949}[{}XI.14.19]{Fell:1988b} asserts that the presence of a transitive system of imprimitivity (in his sense) characterizes induced representations. Its symplectic analog will therefore consist in completing \eqref{f_ind} with a converse:

\begin{theo}
   \label{imprimitivity_theorem}
   Let $(X,\sigma,\Phi)$ be a hamiltonian $G$-space admitting a transitive system of imprimitivity $\LF$ with base $B=\pi(X)$\textup, and write $H$ for the stabilizer of some $b\in B$. Then there is a unique hamiltonian $H$-space $(Y,\tau,\Psi)$ such that
   \begin{equation}
      \label{X=Ind_H^GY}
      \textup{(a)}\quad X=\Ind_H^GY,
      \qquad\qquad\qquad
      \textup{(b)}\quad \pi\inv(b)=\pi\ind\inv(eH).
   \end{equation}
   Explicitly $Y=\pi\inv(b)/\F$\textup, i.e. $Y$ is the reduced space of $X$ at $b\in\LF^*$.
\end{theo}

\begin{remas} Condition (b) only serves to ensure the uniqueness of $Y$, which of course is understood up to isomorphism. Likewise by \eqref{X=Ind_H^GY} we mean ``there is a $J\in\Isom_G(X,\Ind_H^GY)$ that sends $\pi\inv(b)$ to $\pi\ind\inv(eH)$.''

The proof will detail the hamiltonian $H$-space structure of $\pi\inv(b)/\F$.
\end{remas}

\begin{proof}
   1. \emph{The level set $X_b:=\pi\inv(b)$ is a submanifold of $X$}. Indeed, the equivariance of $\pi:X\to B$ ensures that its derivative at $x\in X_b$ (\ref{transitive}ii) maps $\LG(x)$ onto $\LG(b)=T_bB$. So $\pi$ is a submersion, whence our claim.
   
   2. \emph{This submanifold is \emph{coisotropic}\textup, and more precisely\textup, the symplectic orthogonal of $T_xX_b$ is given by}
   \begin{equation}
      \label{orthogonal_T_xX_b}
      (T_xX_b)^\sigma = \LF(x)
   \end{equation}
   (which is isotropic since $\LF$ is commutative; note that \eqref{orthogonal_T_xX_b} is just what one would expect from (\ref{Ker_Im_DPhi(x)}a) if $\F$ were a Lie group and $\pi$ the moment map of its action on $X$). Indeed, the transpose of the exact sequence $0\to T_xX_b\to T_xX\to T_bB\to 0$ shows first that $(T_xX_b)^\sigma$ is the range of the injection
   \begin{equation}
      \label{j_x}
      \begin{tikzcd}
         j_x:T^*_bB\rar[hook] &T_xX
      \end{tikzcd}
   \end{equation}
   obtained by composing $T^*_bB\hookrightarrow T^*_xX$ with the isomorphism $T^*_xX\to T_xX$ given by the symplectic structure. On the other hand, each $f\in\LF$ is by construction the pull-back to $X$ of a function $\dot f$ on $B$, which is also $C^\infty$ since $\pi$ is a submersion. So the definition of $\drag f$ says that $(\drag f)(x) = j_x(-D\dot f(b))$, and proving \eqref{orthogonal_T_xX_b} boils down to showing that the map
   \begin{equation}
      \label{onto_T^*_bB}
      \LF\to T^*_bB,
      \qquad\qquad
      f\mapsto -D\dot f(b)
   \end{equation}
   is onto. To this end, observe that if $Z\in\LG$ and $g_t = \exp(tZ)$ then we have
   \begin{equation}
	   \begin{aligned}
	      Z\in\LH\quad 
	      &\iff\quad \<g_t(b),f\> = \<b,f\>
	      &&\forall f\in\LF,\ \forall t\\
	      &\iff\quad \dot f(g_t(b)) = \dot f(b)
	      &&\forall f\in\LF,\ \forall t\\
	      &\iff\quad \tfrac d{dt}\dot f(g_t(b)) = 0
	      &&\forall f\in\LF,\ \forall t\\
	      &\iff\quad D(\dot f\circ g_t)(b)(Z(b)) = 0\quad
	      &&\forall f\in\LF,\ \forall t\\
	      &\iff\quad D\dot f(b)(Z(b)) = 0
	      &&\forall f\in\LF
	   \end{aligned}
   \end{equation}
   since $\LF$ is $G$-invariant. Since $Z\in\LH$ is also equivalent to $Z(b)=0$, this shows that the $D\dot f(b)$ separate $T_bB$. Hence \eqref{onto_T^*_bB} is onto, and \eqref{orthogonal_T_xX_b} is proved.
   
   3. \emph{The orbit space $Y:=X_b/\F$ admits a unique manifold structure making $X_b\to Y$ a submersion}. (Note that \eqref{orthogonal_T_xX_b} implies that $\LF(x)\subset\LF(x)^\sigma = T_xX_b$, so that the action of $\F$ does indeed preserve $X_b$.) To see this, we note that what was said before \eqref{onto_T^*_bB} means that the action $\F\to\Diff(X_b)$ factors as
   \begin{equation}
      \begin{tikzcd}[row sep=0ex]
         \F\rar{\eqref{onto_T^*_bB}} & T^*_bB\rar &\Diff(X_b)\\
         f\rar[mapsto] & a\rar[mapsto] & \e{\hat a},
      \end{tikzcd}
      \label{factorization}
   \end{equation}
   where $\hat a$ denotes the vector field defined on $X_b$ by $\hat a(x) = j_x(a)$ \eqref{j_x}. Since \eqref{onto_T^*_bB} is onto, this shows that the $\F$-orbits in $X_b$ are in fact the orbits of an action of the (additive) \emph{Lie group} $T^*_bB$. Moreover, the definitions of $\Phi$ and $\hat a$ give, for all $Z\in\LG$,
   \begin{equation}
      \label{ahat_acheck}
      \<D\Phi(x)(\hat a(x)),Z\>
      = \sigma(\hat a(x), Z(x))
      = \<a,Z(b)\>
      = \<\check a, Z\>
   \end{equation}
   where $a\mapsto\check a$ is the transpose of $Z\mapsto Z(b)$, hence a bijection $T^*_bB\to\ann(\LH)$. Thus, $\Phi$ relates the field $\hat a$ on $X_b$ to the \emph{constant} vector field $\check a$. Therefore it intertwines the action $T^*_bB\to\Diff(X_b)$ with a mere action by translations:
   \begin{equation}
      \label{straightening}
      \Phi(\e{\hat a}(x)) = \Phi(x)+\check a.
   \end{equation}
   Since the latter action is free and proper, so is the former by \cite[§III.4.2, Prop. 5]{Bourbaki:1971a}; whence our assertion follows by \cite[§III.1.5, Prop. 10]{Bourbaki:1972}.
   
   4. \emph{$Y$ is naturally a hamiltonian $H$-space}. Indeed, $\sigma_{|X_b}$ vanishes precisely along the $\F$-orbits \eqref{orthogonal_T_xX_b}, hence is the pull-back of a symplectic form $\tau$ on $Y$ \cite[{}9.9]{Souriau:1970}. Likewise the action of $H$, which preserves $X_b$, passes to the quotient because it normalizes the image of \eqref{factorization}, since $\LF$ is $H$-invariant. Finally \eqref{straightening} shows that the resulting $H$-action on $Y$ admits a moment map $\Psi$, defined by the commutativity of the diagram
   \begin{equation}
      \begin{tikzcd}
         X_b \rar{\Phi}\dar[swap]{\F(\cdot)} &\LG^*\dar\\
         Y   \rar{\Psi}                &\LH^*\rlap{.}
      \end{tikzcd}
      \label{leaf_space}
   \end{equation}
   
   5. \emph{The induced manifold $\Ind_H^GY$ is isomorphic to $X$ as a hamiltonian $G$-space}. Indeed, let us adopt the notation of §\ref{KKS} and consider the maps $\varepsilon$ (resp.~$j$) from $G\x X_b$ to $X$ (resp.~to $T^*G\x Y$):
   \begin{equation}
      \label{epsilon_j}
      \varepsilon(q,x)=q(x),
      \qquad\text{resp.}\qquad
      j(q,x) = (q\Phi(x),\F(x)).
   \end{equation}
   It follows from \eqref{psi} and (\ref{factorization}--\ref{straightening}) that $j$ is a diffeomorphism $G\x X_b\to\psi\inv(0)$. Moreover one checks without trouble that $j$ maps fibers of $\varepsilon$ to $H$-orbits. Passing to the quotient, we obtain therefore a diffeomorphism $J$:
   \begin{equation}
      \begin{tikzcd}
         G\x X_b \rar{j}\dar[swap]{\varepsilon} 
         & \psi\inv(0) \rar[hook]\dar{\eqref{Ind_H^GY}}
         & T^*G\x Y\\
         X \rar[dashed]{J} & \Ind_H^GY
      \end{tikzcd}
      \label{J}
   \end{equation}
   which visibly is $G$-equivariant and maps $X_b$ onto $\pi\ind\inv(eH)$. To see that $J$ is symplectic, let us regard the variables which appear in \eqref{epsilon_j}:
   \begin{equation}
      \tilde x = q(x),\qquad
      y = \F(x),\qquad
      m = \Phi(x),\qquad
      n = (qm, y)
   \end{equation}
   as functions of $(q,x)\in G\x X_b$. Each tangent vector $(\d q,\d x)\in T_qG\x T_xX_b$ then gives (via the tangent map) a vector $(\d\tilde x, \d y, \d m, \d n)$ as well as an element $Z = q\inv\d q$ of $\LG$. That being said, the definition of $\omega = d\theta+\tau$, the definition of $\tau$ above, and formulas \cite[{}11.17$\sharp,\flat$]{Souriau:1970}, give
   \begin{equation}
	   \begin{aligned}
	      \llap{$\omega(\d n,\d'n)$}
	      &= \<\d m, Z'\> - \<\d'm,Z\> + \<m,[Z',Z]\> + \tau(\d y, \d'y)\\
	      &= \sigma(\d x,Z'(x)) 
	       - \sigma(\d'x,Z(x)) 
	       + \sigma(Z(x),Z'(x)) 
	       + \sigma\rlap{$(\d x,\d'x)$}\\
	      &= \sigma(\d x+Z(x),\d'x+Z'(x))\\
	      &= \sigma(\d\tilde x,\d'\tilde x).
	   \end{aligned}
   \end{equation}
   This shows that $j^*\omega = \varepsilon^*\sigma$, whence $J^*\sigma\ind = \sigma$ by \eqref{J} and by definition of $\sigma\ind$. Finally it is clear on \eqref{phi} and \eqref{epsilon_j} that $J^*\Phi\ind = \Phi$.
   
   6. \emph{There remains to establish the uniqueness assertion}. To this end, suppose that $(Y,\tau,\Psi)$ is any solution of the problem, so that one has a $G$-equivariant commutative diagram
   \begin{equation}
      \begin{tikzcd}
         X \rar\dar[swap]{\pi} & \Ind_H^GY \dar{\pi\ind}\\
         B \rar & G/H\rlap{,}
      \end{tikzcd}
      \label{uniqueness}
   \end{equation}
   where the horizontal arrows are isomorphisms and the bottom one maps $b$ to $eH$. This identifies $T^*_bB$ to $(\LG/\LH)^*\simeq\ann(\LH)$, and its action \eqref{factorization} on the fiber $X_b$ to the action of $\ann(\LH)$ by translation of $m$ in     
   \begin{equation}
      \label{fiber_pi_ind}
	   \begin{aligned}
	   	\pi\ind\inv(eH)
	   	&=\{H(m,y):(m,y)\in\LG^*\x Y, m\subh=\Psi(y)\}\\
	   	&\simeq \{(m,y)\in\LG^*\x Y: m\subh=\Psi(y)\}.
	   \end{aligned}
   \end{equation}
   (Here we use the observation that the elements of $\smash[b]{\pi\ind\inv(eH)}$, seen as $H$-orbits in $T^*G\x Y$ \eqref{Ind_H^GY}, each have a unique representative in $T^*_eG\x Y$.) Moreover the 2-form of $T^*G\x Y$ reduces on \eqref{fiber_pi_ind} to that of $Y$. Thus, $Y$ becomes identified with the quotient $X_b/\F$ we have constructed.
\end{proof}

\begin{exem}
   The theorem contains the equality $T^*(G/H)=\Ind_H^G\{0\}$, valid for any closed subgroup $H\subset G$. On the other hand, if we deprive $T^*(G/H)$ of its zero section, \eqref{imprimitivity_theorem} ceases to apply, because functions lifted from the base no longer have complete symplectic gradients (cf. \eqref{flow_eta}).
\end{exem}

\begin{rema}
   When both $G$ and $H$ are connected and simply connected, we mentioned in the Introduction that Theorem \eqref{imprimitivity_theorem} can be deduced from \cite{Xu:1991}. Briefly this is because the double fibration   \begin{equation}
      \begin{tikzcd}[row sep=tiny, column sep=small]
               & T^*G\dlar[swap]{\alpha\vphantom{\beta}}\drar{\beta}\\
         \LH^* &                               & \LG^*\rlap{$\x G/H$}
      \end{tikzcd}
      \label{bimodule}
   \end{equation}
   where $\alpha$ and $\beta$ map $p\in T^*_qG$ to $q\inv p\subh$, resp.~$(pq\inv,qH)$, is in Xu's terms an \emph{equivalence bimodule} between Poisson manifolds, whose \emph{complete symplectic realizations} $\Psi$ (resp.~$\Phi\x\pi$) arise from hamiltonian $H$-spaces \cite[Lem.~3.1]{Xu:1992}, resp.~from hamiltonian $G$-spaces with a system of imprimitivity \eqref{imprimitivity} based on $G/H$.
\end{rema}

\section{Morphisms}
Mackey completed his imprimitivity theorem with a bijection between the intertwining space of two $H$-modules, and \emph{part} of the intertwining space of the $G$-modules they induce \cites[Thm 6.4]{Mackey:1958}{Blattner:1962a}. The~symplectic analog is as follows. Given hamiltonian $G$-spaces $X_1$, $X_2$ with systems of imprimitivity over the same base $B$ (\ref{same_base}c), write $\Isom_B(X_1,X_2)$ for the set of all $J\in\Isom_G(X_1,X_2)$ such that the diagram
\begin{equation}
   \begin{tikzcd}[column sep=small,row sep=small]
      X_1\arrow{rr}{J}\drar[swap]{\pi_1} & & X_2\dlar{\pi_2}\\
      & B
   \end{tikzcd}
\end{equation}
commutes. Then we have:

\begin{theo}
   \label{intertwining}
   $\Isom_{G/H}(\Ind_H^GY_1,\Ind_H^GY_2)=\Isom_H(Y_1,Y_2)$.
\end{theo}

\begin{proof}
   Write $X_1$, $X_2$ for the manifolds induced by $Y_1$, $Y_2$ and $\pi_1$, $\pi_2$ for their projections \eqref{pi_ind} to $G/H$. Given $j\in\Isom_H(Y_1,Y_2)$, the symplectomorphism
   \begin{equation}
      \text{id}\x j: T^*G\x Y_1\to T^*G\x Y_2
   \end{equation}
   clearly passes to the quotients \eqref{Ind_H^GY} where it defines a $J\in\Isom_{G/H}(X_1,X_2)$. Conversely, any $J\in\Isom_{G/H}(X_1,X_2)$ maps $\smash{\pi_1\inv(eH)}$ onto $\smash{\pi_2\inv(eH)}$ while respecting the 2-forms and hence their characteristic foliations. There results an isomorphism between the leaf spaces, which Theorem \eqref{imprimitivity_theorem} shows are precisely $Y_1$ and $Y_2$. Finally one checks without trouble that the correspondences thus defined are each other's inverse.
\end{proof}

\begin{exems}
   \label{non_isomorphic}
   The theorem allows $\Isom_G(\Ind_H^GY_1,\Ind_H^GY_2)$ to be strictly larger than $\Isom_H(Y_1,Y_2)$. Thus:
   
   (a) \emph{Non-isomorphic spaces can induce isomorphic spaces}. Let $G=\CC\rtimes\mathbf{U}(1)$ be the displacement group of the plane, and $\Real$, $\Imaginary$ the linear forms `real part' and `imaginary part' regarded as coadjoint orbits of the subgroup $\CC$. Then $\Ind_\CC^G\Real$ and $\Ind_\CC^G\Imaginary$ are one and the same coadjoint orbit of $G$ (a cylinder).
   
   (b) \emph{A space without automorphisms can induce a space with automorphisms}. Let $G\tilde{\phantom\imath}=\CC\rtimes\RR$ be the universal covering of the previous $G$. Then $\smash{\Ind_\CC^{G\tilde{\phantom\imath}}\!\Real}$ is the universal covering of the previous cylinder, whose homotopy provides nontrivial automorphisms (deck transformations). Their presence reflects the fact that the representation $\smash{\Ind_\CC^{G\tilde{\phantom\imath}}\!\e{\mathrm i\Real}}$ is reducible \cite[p.\,189]{Bernat:1972} even though $G\tilde{\phantom\imath}$ is transitive on $\smash{\Ind_\CC^{G\tilde{\phantom\imath}}\!\Real}$.
\end{exems}

\section{The Homogeneous Case}
To apply the imprimitivity theorem \eqref{imprimitivity_theorem}, we need (transitive) systems of imprimitivity. When $X$ itself is homogeneous, these are all based on quotients $G/H$ by very special subgroups:

\begin{theo}
	\label{homogeneous_case}
	Let $(X,\sigma,\Phi)$ be a homogeneous hamiltonian $G$-space\textup, and $H$ a closed subgroup of $G$. Then $X$ admits a system of imprimitivity with base $G/H$ if and only if there is an $x\in X$ such that\textup, writing $\check c=\Phi(x)$\textup,
	\begin{enumerate}[\upshape(a)]
	   \item $H$ contains the stabilizer $G_x$\textup;
	   \item $\LH$ is \emph{coisotropic} at $\check c$\textup:\quad$\ann(\LH(\check c))\subset\LH$\textup;
	   \item $\LH$ satisfies the \emph{Pukánszky condition} at $\check c$\textup:\quad$\check c + \ann(\LH)\subset G(\check c)$.
	\end{enumerate}
\end{theo}

\begin{proof}
   Suppose $X$ admits a system \eqref{imprimitivity} with base $B=G/H$, and put $b=eH$. Then Theorem \eqref{imprimitivity_theorem} applies; so we have a diagram \eqref{uniqueness}, and \eqref{fiber_pi_ind} gives
   \begin{equation}
      \label{Phi(X_b)}
	   	\Phi(X_b)=\{m\in\LG^*: m\subh\in\Psi(Y)\}.
   \end{equation}
   Pick $x\in X_b$. Since $G$ is transitive on $X$, we have $H(x) = X_b$. So the orbit $H(\check c)$ equals \eqref{Phi(X_b)} and therefore contains $\check c + \ann(\LH)$, whence (c). Likewise its tangent space $\LH(\check c)$ contains $\ann(\LH)$, whence (b). Finally the equivariance of $\pi$ ensures (a).
   
   Conversely, suppose $x$ satisfies (a,b,c) and let us show that $X$ admits a system of imprimitivity with base $B=G/H$. By (a), $g(x)\mapsto gH$ well-defines an equivariant submersion $\pi:X\to B$ whose fiber at $b=eH$ is $X_b=H(x)$. Now put $\LF = \pi^*(C^\infty(B))$. Then we have the relations
   \begin{equation}
      \LH(x)^\sigma \subset \LH(x),
      \qquad
      \LH(x)^\sigma = \LF(x),
      \qquad
      \LF(x) \subset \LF(x)^\sigma.
   \end{equation}
   The first one comes from (b) and from $\sigma(\LH(x),Z(x))=\<\LH(\check c),Z\>$ \eqref{KKS_theorem}; the second one is proved like \eqref{orthogonal_T_xX_b}, and the third one follows. Moreover it is clear by transport of structure that the third relation still holds at all $x'\in X$, and the other two for all $x'$ in the $H$-orbit $X_b$. This shows that $\LF$ is a commutative subalgebra of $C^\infty(X)$, whose symplectic gradients $\eta$ are tangent to $X_b$. There remains to see that $\e{t\eta}(x)$ exists for all $t$. But \eqref{ahat_acheck} shows that $\Phi$ relates $\eta_{|X_b}$ to a constant field $\check a\in\ann(\LH)$, whose integral curve lies in $\Phi(X)$ by (c). Since $\Phi$ is a covering \eqref{KKS_theorem} this curve lifts to $X$, whence the conclusion.
\end{proof}

\begin{remas}
   \label{homogeneous_remarks}
   (a) Our proof exhibits $X$ as induced from the leaf space $Y$ of $H(x)$, which is a covering space of the coadjoint orbit $\smash{H(\check c\subh)}$ (\ref{elementary}c, \ref{orthogonal_T_xX_b}, \ref{leaf_space}). Note however that, as (\ref{non_isomorphic}a) shows, the relation $X=\smash{\Ind_H^GY}$ by itself does not characterize $Y$: this is the role of condition (\ref{X=Ind_H^GY}b).
   
   (b) The inclusion (\ref{homogeneous_case}b) is an equality iff $\LH$ is a \emph{real polarization}, iff $Y$ is zero-dimensional. More generally, the two terms of (\ref{homogeneous_case}b) can be the subalgebras $\LD$ and $\LE$ associated with a complex polarization \cite{Bernat:1972}. In that case, the observation that $G(\check c)$ is induced from $E$ goes back to \cite[~{}3.12ii]{Duval:1992}.

   (c) All of the above applies when $G$ is finite or discrete. Then (\ref{homogeneous_case}b,c) hold automatically, so a space $X=G/R$ is induced from any intermediate subgroup, $R\subset H\subset G$: \eqref{homogeneous_case} simply says that $G/R = \smash{\Ind_H^G(H/R)}$, and we recover the \emph{classical} notion of imprimitivity \cites[§3]{Neumann:2006}[{}2.3a]{Zimmer:1978}.
\end{remas}

\specialsection*{\textbf{Chapter II: The Normal Subgroup Analysis}}
\section{The Little Group Step}

A key observation in representation theory is that the restriction of an irreducible representation to a normal subgroup can only involve irreducibles which are conjugate under the ambient group \cites[Thm 1]{Clifford:1937a}[Lem.~9]{Blattner:1965}. The symplectic analog is clear: if $N$ is normal in $G$, then $G$ acts naturally in $\LN$ and $\LN^*$, and respects the partition of $\LN^*$ in $N$-orbits. So $G$ acts in the orbit space $\LN^*/N$, and the successive maps 
\begin{equation}
   \label{successive_projections}
   \begin{tikzcd}
      X
      \rar[swap]{\Phi\mathstrut}
      \arrow[loop below,in=-70,out=-115,looseness=4,swap,->]{}{G} &
      \smash{\LG\rlap{${}^*$}\ }
      \rar[swap]{(\cdot)\subn}
      \arrow[loop below,in=-70,out=-115,looseness=4,swap,->]{}{G} &
      \smash{\LN\rlap{${}^*$}\ }
      \rar[swap]{N(\cdot)}
      \arrow[loop below,in=-70,out=-115,looseness=4,swap,->]{}{G} &
      \smash{\LN^*/N}
      \arrow[loop below,in=-70,out=-115,looseness=4,swap,->]{}{G}
   \end{tikzcd}
\end{equation}
are $G$-equivariant. As a result we have the following triviality, in which the stabilizer $G_U$ (or sometimes $G_U/N$) is known as the \emph{little group}: 

\begin{theo}
   \label{little_group}
   Let $N\subset G$ be a closed normal subgroup. Then \eqref{successive_projections} maps any homogeneous hamiltonian $G$-space $(X,\sigma,\Phi)$ onto a $G$-orbit $\mathcal U=G(U)=G/G_U$ in $\LN^*/N$.\qed
\end{theo}

\section{The Induction Step}

In the setting of \eqref{little_group}, the representation-theoretic analogy next leads us to expect that $X$ should admit a system of imprimitivity based on $\mathcal U$, and hence should be induced \cites[Satz 133]{Speiser:1923}[Thm 2]{Blattner:1965}. This expectation is rewarded:

\begin{theo}
   \label{induction_step}
   Let $N\subset G$ be a closed normal subgroup\textup, and $U\in\LN^*/N$ an orbit such that $H:=G_U$ is closed. Then $H$ contains $N$\textup, and $Y\mapsto X=\smash{\Ind_H^GY}$ defines a bijection between \textup(isomorphism classes of\textup)
   \begin{enumerate}[\upshape(a)]
      \item homogeneous hamiltonian $G$-spaces $(X,\sigma,\Phi)$ such that $U\subset\Phi(X)\subn$\textup;
      \item homogeneous hamiltonian $H$-spaces $(\rlap{$Y$}\phantom{X},\tau,\Psi)$ such that $U=\Psi(Y)\subn$.
   \end{enumerate}
   Explicitly the inverse map sends $X$ to its reduced space at $U$ in \textup{\cite{Kazhdan:1978}'}s sense\textup, i.e.~$Y$ is the quotient of $\Phi(\cdot)_{\smash{|}\LN}\inv(U)$ by its characteristic foliation. Moreover $X$ is a coadjoint orbit of $G$ iff $Y$ is a coadjoint orbit of $H$.
\end{theo}

\begin{proof}
   The inclusion $N\subset H$ is clear, for $N$ acts trivially on $\LN^*/N$. To prove that $Y\mapsto X$ is onto, let $X$ be as in (a) and write $\pi:X\to\mathcal U=G(U)$ for the composition of the three maps \eqref{successive_projections}. We claim that $\pi^*(C^\infty(\mathcal U))$ is a system of imprimitivity on $X$. Indeed, this will result from Theorem \eqref{homogeneous_case} if we show that $H$ satisfies the following three relations, where $x\in X$ has successive images $\check c=\Phi(x)$, $c=\check c\subn$, and $U=N(c)$ under \eqref{successive_projections}:
   \begin{flalign}
      \stepcounter{equation}
      \label{H_contains_G_x}
      &H \text{ contains the stabilizer } G_x,&&\tag{\theequation a}\\[-.5ex]
      \label{H_coisotropic}
      &\LN_c(\check c)=\ann(\LH),&&\tag{\theequation b}\\[-.5ex]
      \label{Pukanszky}
      &N_c\o(\check c) = \check c + \ann(\LH)
      \qquad\qquad\text{(cf.~Pukánszky \cite[Lemma 2]{Pukanszky:1978}).}&& \tag{\theequation c}
   \end{flalign}
   The first relation is clear by equivariance of $\pi$. To prove the second, observe that by definition of the stabilizers $H=G_U$, $G_c$, and $N_c$ we have
   \begin{gather}
      \stepcounter{equation}
      H=NG_c,\tag{\theequation a} \label{H=NG_c}\\
      \LG_c = \ann(\LN(\check c)),\tag{\theequation b}\\
      \LN_c = \{Z\in\LN:Z(\check c)\in\ann(\LN)\},\tag{\theequation c}
   \end{gather}   
   whence
   $
   \ann(\LH)
   \overset{\text{a}}{=}\ann(\LN+\LG_c)
   =\ann(\LG_c)\cap\ann(\LN)
   \overset{\text{b}}{=}\LN(\check c)\cap\ann(\LN)
   \overset{\text{c}}{=}\nobreak\LN_c(\check c)
   $,
   as claimed. To see \eqref{Pukanszky}, note that \eqref{H_coisotropic} says that $\LN_c$ stabilizes $\smash{\check c\subh}$. Therefore so does $N_c\o$, which means that $N_c\o(\check c) \subset \check c + \ann(\LH)$. To prove the reverse inclusion, note that $\LN$ being an ideal implies $\ad(Z)^n(\LG)\subset [\LN_c,\LN]$ for all $Z\in\LN_c$ and all $n\geqslant2$. Since $c = \check c\subn$ vanishes on $[\LN_c,\LN]$ it follows that
   \begin{equation}
	   \begin{aligned}
	      \<\exp(Z)(\check c),Z'\>
	      &=\Bigl\<
	      \check c,\sum_{n=0}^\infty\frac{(-1)^n}{n!}\ad(Z)^n(Z')
	      \Bigr\>\\
	      &=\<\check c, Z'-[Z,Z']\>\\
	      &=\<\check c+Z(\check c), Z'\>
	   \end{aligned}
   \end{equation}
   for all $Z\in\LN_c$ and $Z'\in\LG$. Thus $\check c+\LN_c(\check c)$ is contained in $N_c\o(\check c)$, as claimed. So \eqref{homogeneous_case} applies, and $\pi^*(C^\infty(\mathcal U))$ is a system of imprimitivity. Let, then, $Y$ be the hamiltonian $H$-space provided by the imprimitivity theorem \eqref{imprimitivity_theorem}. It satisfies $X=\smash{\Ind_H^GY}$ and, by diagram \eqref{leaf_space}, $\smash{\Psi(Y)\subn}=\smash{\Phi(\pi\inv(U))\subh{}\subn}=U$, as claimed. Moreover we know that it is the quotient of $\pi\inv(U)$ by its characteristic foliation \eqref{orthogonal_T_xX_b}. Finally it is homogeneous (\ref{elementary}c), and is the coadjoint orbit $H(\check c\subh)$ when $X$ is the coadjoint orbit $G(\check c)$ (\ref{elementary}d, \ref{homogeneous_remarks}a).
   
   There remains to show that conversely, if $Y$ in (\ref{induction_step}b) is homogeneous or a coadjoint orbit, then so is $X=\smash{\Ind_H^GY}$. To this end, fix $y\in Y$, put $c=\Psi(y)\subn$, and pick a $\check c\in\LG^*$ such that $\check c\subh=\Psi(y)$. Now apply Lemma \eqref{Pukanszky} with the data $(G,X)$ replaced by $(G,G(\check c))$, resp.~$(H,Y)$. Taking into account that $\Psi$ is a covering, we obtain the relations
   \begin{equation}
      \label{Pukanszky_redux}
      N_c\o(\check c) = \check c + \ann(\LH),
      \qquad\text{resp.}\qquad
      N_c\o(y) = \{y\}.
   \end{equation}
   With that said, let $(qm_1,y_1)\in T^*G\x Y$ be a representative of an arbitrary element of the induced manifold \eqref{Ind_H^GY}. By transitivity, choose $h\in H$ such that $y_1=h(y)$, and put $m_2=h\inv(m_1)$. Then $(qhm_2,y)$ is another representative of the same element. Moreover the vanishing of \eqref{psi} implies that $m_2{}\subh=\Psi(y)=\check c\subh$, and then \eqref{Pukanszky_redux} shows that $m_2=n(\check c)$ for some $n\in N_c\o\subset H_y$. It follows that $(qhn\check c,y)$ is a third representative of the same element. So this element is the image by $g=qhn$ of the one represented by $(\check c, y)$, which shows that $G$ is transitive on $X$.
   
   Assume further that $Y$ is a coadjoint orbit, i.e.~the covering $\Psi$ is trivial, or in other words the inclusion $H_y\subset H_{\Psi(y)}$ is an equality. Writing $x=H(\check c,y)$ for the element of $\smash{\Ind_H^GY}$ we just mentioned (which $\Phi\ind$ visibly maps to $\check c$), we must show that the inclusion $G_x\subset G_{\check c}$ is also an equality. But $\check c$ was chosen so that
   \begin{equation}
      \check c\subh = \Psi(y)
      \qquad\text{and}\qquad
      \check c\subn = c.
   \end{equation}
   The second relation implies that $G_{\check c}\subset G_c\subset H$, and the first further implies that $G_{\check c}\subset H_{\Psi(y)} = H_y$. We conclude that $g\in G_{\check c}$ implies $g(x) = H(g\check c,y) = H(\check cg,y) = H(\check c, g(y)) = H(\check c, y) = x$, whence $G_{\check c}\subset G_x$.
\end{proof}

\begin{exem}
   The hypothesis in \eqref{induction_step} that $G_U$ is closed is not gratuitous, because $\LN^*/N$ needs not be Hausdorff. In fact it fails in well-known cases like the following: Let $G$ consist of all matrices of the form
   \begin{equation}
      g=
      \begin{pmatrix}
         a & b\\
         0 & c
      \end{pmatrix}
   \end{equation}
   where $a$ belongs to the diagonal torus $T\subset\mathbf U(2)$, $b$ is diagonal in $\Lie{gl}(2,\CC)$, and $c$ is in an irrational line $L\subset T$. If $N$ denotes the (`Mautner') subgroup $a=\bm1$ and $U$ the orbit of the value of the 1-form $\Re(\Tr(db))$ at the identity, then one finds that $G_U$ is the dense subgroup $\{g\in G:a\in L\}$.
\end{exem}

\section{The Obstruction Step}

In representation theory, the $H$-modules which upon restriction to a normal subgroup $N$, are a multiple of a given irreducible $N$-module, all arise by tensoring the latter with a projective $H/N$-module which ``kills the cocycle'' \cites[Thm 3]{Clifford:1937a}[{}XII.4.28]{Fell:1988b}. We develop here a symplectic analog of this idea.

\subsection{The flat bundle construction}
In the setting of (\ref{induction_step}b), let us fix $c\in U$. To improve on the (misled) expectation that $Y$ might be a product $U\x V$, let
\begin{equation}
   \label{U_tilde}
   \rho: \tilde U\to U
\end{equation}
be the covering $N/N\sub c\o\to N/N_c$ with principal group $\Gamma = N_c/N\sub c\o$. To any symplectic manifold $(V,\omega)$ with an $\omega$-preserving action of $\Gamma$, is then naturally associated the \emph{flat bundle}
\begin{equation}
   \label{flat_bundle}
   \tilde U\x_\Gamma V\to U
\end{equation}
with total space the set of orbits $[\tilde u,v]$ of the product action of $\Gamma$ on $\tilde U\x\nobreak V$. The $N$-action $n([\tilde u,v])=[n\tilde u, v]$ and the 2-form deduced from $\rho^*\sigma_{KKS}+\nobreak\omega$ (\ref{T*Q_KKS}b) by passage to the quotient make it a hamiltonian $N$-space with moment map \eqref{flat_bundle}, i.e., $[\tilde u,v]\mapsto\rho(\tilde u)$. 

\subsection{Barycentric decomposition and the Mackey obstruction}
Theorem \eqref{induction_step} used induction to reduce matters to the \emph{primary case}: a hamiltonian $H$-space $Y$ sitting above \emph{one} ($H$-stable) \emph{coadjoint orbit} of $N\subset H$. Using the flat bundle construction \eqref{flat_bundle}, we can further reduce to the case where we sit above \emph{one point}:

\begin{theo}
   \label{obstruction_step}
   Let $N\subset H$ be a closed normal subgroup\textup, $U=N(c)$ an $H$-stable coadjoint orbit of $N$\textup, and $\tilde U$ its covering \eqref{U_tilde} with group $\Gamma=N_c/N\sub c\o$. Then $V\mapsto Y=\smash{\tilde U}\x_\Gamma V$ defines a bijection between \textup(isomorphism classes of\textup)
   \begin{enumerate}[\upshape(a)]
      \item homogeneous hamiltonian $H$-spaces $(Y,\tau,\Psi)$ such that $\Psi(Y)\subn=U$\textup;
      \item homogeneous hamiltonian $H_c$-spaces $(V,\omega,\Upsilon)$ such that $\Upsilon(V)\subk=\{c\subk\}$.
   \end{enumerate}
   The inverse map sends $Y$ to the fiber $\Psi(\cdot)_{\smash{|}\LN}\inv(c)$\textup; this fiber is symplectic and $N\sub c\o$ acts trivially on it\textup, whence it carries an action of $\Gamma$. Moreover $Y$ is a coadjoint orbit iff $V$ is a coadjoint orbit. In addition if we pick $\breve c\in\LH^*$ such that $\breve c\subn = c$ then the formulas
   \begin{align}
      l(nN\sub c\o) &= lnl\inv N\sub c\o \tag{c}\\
      \varphi(nN\sub c\o) &= n(\breve c)_{|\LH_c} \tag{d}\\
      \theta(lN\sub c\o) &= (\breve c - l(\breve c))_{|\LH_c} 
      \rlap{$\quad\in\ann_{\LH_c^*}(\LN_c)\simeq(\LH_c/\LN_c)^*$}
      \tag{e}
   \end{align}
   define respectively a hamiltonian action of $H_c$ on $\tilde U$\textup, a moment map $\varphi$ for it\textup, and a non-equivariance cocycle such that $\varphi(l(\tilde u))=l(\varphi(\tilde u))+\theta(lN\sub c\o)$. The cohomology class $[\theta]\in H^1(H_c/N\sub c\o,(\LH_c/\LN_c)^*)$ is independent of $\breve c$ and vanishes if $c_{|\LN_c}=0$. Else its derivative is the cohomology class of the central extension
   \begin{equation}
      \begin{tikzcd}[column sep=scriptsize]
         0\rar &\LN_c/\Lie j\rar &\LH_c/\Lie j\rar &\LH_c/\LN_c\rar &0
      \end{tikzcd}
      \tag{f}
   \end{equation}
   where $\Lie j=\Ker(c_{|\LN_c})$\textup, and the categories \textup{(\ref{obstruction_step}a,b)} are equivalent to 
   \begin{equation}
      \{\text{homogeneous hamiltonian $(H_c/N\sub c\o,[-\theta])$-spaces}\},
      \tag{g}
   \end{equation}
   whereby we mean symplectic manifolds $(V,\omega)$ with a hamiltonian action of $H_c/N\sub c\o$ whose moment map $\psi$ can be chosen so that $\psi(\ell(v))=\ell(\psi(v))-\theta(\ell)$.
\end{theo}

\begin{proof}
   See \cite[Thm 2.2, Cor.~3.3, Cor.~3.9]{Iglesias-Zemmour:2015}.
\end{proof}

\begin{remas}
   (a) The class $[\theta]$ measures the obstruction to making $U$, on which $H$ acts symplectically, a hamiltonian $H$-space. We call it the (symplectic) \emph{Mackey obstruction} of $U$, and we call its derivative $[f]\in H^2(\LG_c/\LN_c,\RR)$, or the extension (\ref{obstruction_step}f) it encodes,  the \emph{infinitesimal Mackey obstruction} of $U$. Notice that it can be regarded as an extension of $\LH/\LN$, for $H=NH_c$ \eqref{H=NG_c} implies $H/N=H_c/N_c$. 
   
	(b) When $N$ is a Heisenberg group, Theorem \eqref{obstruction_step} is due to Souriau \cite[Thm 13.15]{Souriau:1970} who called it \emph{barycentric decomposition}: in applications, $U$ models the motions of a center of mass and $V$ the proper motions around it. In this case we have $\Gamma = 0$ and so $Y = U\x V$. For examples where $\Gamma\ne0$ and $Y$ doesn't split as a product we refer to \cite[{}4.5, 4.18]{Iglesias-Zemmour:2015}.
   
   (c) The simplest case with nonzero Mackey obstruction is when $H$ is a Heisenberg group and $N$ is its center. In contrast, the following symplectic analog of \cite[Prop.~XII.5.5]{Fell:1988b} gives conditions where it must be zero:
\end{remas}

\begin{prop}
   \label{semidirect}
   In the setting of \eqref{obstruction_step}\textup, assume that $U$ is a point-orbit~$\{c\}$ and that $N$ is a connected semidirect factor in $H$. Then the Mackey obstruction of $U$ vanishes.
\end{prop}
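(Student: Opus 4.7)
The plan is to exploit the splitting $H = N \rtimes K$ to exhibit a specific extension $\breve c \in \LH^*$ of $c$ for which the Mackey cocycle (\ref{obstruction_step}e) vanishes identically, forcing $[\theta] = 0$.

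Since $U = \{c\}$ is a point-orbit, $N_c = N$; connectedness of $N$ then gives $N\sub c\o = N$, so $H_c/N\sub c\o = H_c/N$. The semidirect decomposition yields $H_c = N \rtimes K_c$ where $K_c$ denotes the stabilizer of $c$ in $K$, and correspondingly $\LH_c = \LN \oplus \Lie{k}_c$. I take $\breve c$ defined by $\breve c_{|\LN} = c$ and $\breve c_{|\Lie{k}} = 0$; the goal is to verify that $l(\breve c)_{|\LH_c} = \breve c_{|\LH_c}$ for every $l \in H_c$.

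Note that the subset $S \subset H_c$ cut out by this condition is a subgroup, since $\Ad(l^{-1})$ preserves $\LH_c$ for $l \in H_c$. Hence it suffices to show $K_c \subset S$ and $N \subset S$, given that $H_c = NK_c$. The inclusion $K_c \subset S$ is immediate: $\Ad(k^{-1})$ preserves each summand of $\LH = \LN \oplus \Lie{k}$, so on the $\LN$-component of $X \in \LH_c$ one uses that $c$ is $k$-invariant, and on the $\Lie{k}_c$-component one uses that $\breve c$ vanishes on $\Lie{k}$. For $n \in N$ and $W \in \Lie{k}_c$, write $\Ad(n^{-1}) W - W = Z_0(n, W) \in \LN$; the claim then reduces to showing that the function $\phi(n) := c(Z_0(n, W))$ is identically zero on $N$.

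The key step is that $\phi$ is an additive homomorphism $N \to \RR$ (a short calculation from the homomorphism property of $\Ad$ and the $N$-invariance of $c$), whose derivative at the identity in direction $Z \in \LN$ equals $-c([Z, W])$. This vanishes because $W \in \Lie{k}_c$ says exactly that $c$ kills $[\LN, W]$. Connectedness of $N$ globalizes $d\phi_e = 0$ to $\phi \equiv 0$; this last globalization is the only place where the connectedness hypothesis is used, and constitutes the main (mild) obstacle.
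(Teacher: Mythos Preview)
Your proof is correct and follows essentially the same strategy as the paper's: choose $\breve c$ vanishing on the complementary factor, then show that $\breve c$ is fixed by both $K$ and $N$ (note that $U$ being $H$-stable forces $H_c=H$, so your subscripts $K_c$, $\LH_c$ are harmless but unnecessary). The only organizational difference is in the $N$-invariance step: instead of your auxiliary homomorphism $\phi$, the paper observes in one line that $\langle\LN(\breve c),\LH\rangle=\langle\breve c,[\LH,\LN]\rangle=\langle c,[\LN,\LH]\rangle=\langle\LH(c),\LN\rangle=0$ (using $[\LH,\LN]\subset\LN$ and $H(c)=\{c\}$), so that $\LN(\breve c)=0$ and connectedness gives $N(\breve c)=\{\breve c\}$ at once.
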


\begin{proof}
   First assume only that $N$ is connected and $H(c)=\{c\}$. Then we have $(H_c,N_c,N\sub c\o,\tilde U) = (H,N,N,U)$, and $\breve c\subn = c$ implies $\<\LN(\breve c),\LH\>=\<\breve c,[\LH,\LN]\>=\<c,[\LN,\LH]\> = \<\LH(c),\LN\>=0$. Hence $\LN(\breve c)=0$ and therefore
   \begin{equation}
      \label{N_fixed}
      N(\breve c)=\{\breve c\}.
   \end{equation}
   (This indeed is what makes (\ref{obstruction_step}d): $\varphi(c)=\breve c$ well-defined.) Now add the semidirect factor assumption: $H$ has another closed subgroup $S$ with $H=NS$ and $N\cap S =\{e\}$. Then $\LH=\LN\oplus\LS$, and we can choose the extension $\breve c$ so that $\breve c\subs = 0$. Then $s\in S$ implies $\<s(\breve c),\nu+\sigma\>=\<s(c),\nu\>+\<\breve c,s\inv\sigma s\>=\<c,\nu\>=\<\breve c,\nu+\sigma\>$ for all $\nu+\sigma\in\LH$, whence
   \begin{equation}
      \label{S_fixed}
      S(\breve c)=\{\breve c\}.
   \end{equation}
   Together \eqref{N_fixed} and \eqref{S_fixed} show that $\breve c$ is $H$-invariant, i.e., the cocycle (\ref{obstruction_step}e) is identically zero.
\end{proof}

\section{Synopsis}

Putting Theorems (\ref{little_group}, \ref{induction_step}, \ref{obstruction_step}) together and observing that $c\in\LN^*$ has the same stabilizer in $G$ as in $H=G_U$, we obtain the following summary result:

\begin{theo}
   \label{synopsis}
   Let $N\subset G$ be a closed normal subgroup and $(X,\sigma,\Phi)$ a~homogeneous hamiltonian $G$-space. Then $\smash{\Phi(X)\subn}$ sits above a $G$-orbit $G(U)$ in~$\LN^*/N$. If the stabilizer $G_U$ is closed and $U=N(c)$ has Mackey obstruction $[\theta]$\textup, then there is a unique homogeneous hamiltonian $(G_c/N\sub c\o,[-\theta])$-space $V$ such~that
   \begin{equation}
      X = \Ind_{G_U}^G (\tilde U\x_\Gamma V)
   \end{equation}
   where $\tilde U$ is the covering \eqref{U_tilde} with group $\Gamma = N_c/N\sub c\o$. Every homogeneous hamiltonian $(G_c/N\sub c\o,[-\theta])$-space arises in this way\textup, and $X$ is a coadjoint orbit iff $V$ is an orbit of the affine coadjoint action defined by $-\theta$ \textup{\cite[{}11.28]{Souriau:1970}}.\qed
\end{theo}

\specialsection*{\textbf{Chapter III: Applications}}
\section{Symplectic Mackey-Wigner Theory}

When the normal subgroup of \eqref{induction_step} is connected abelian, its coadjoint orbits are points and the obstruction step \eqref{obstruction_step} becomes tautological. Taking \eqref{semidirect} into account, we see that the theory then boils down (in the case of coadjoint orbits) to the following:

\begin{theo}
   \label{Mackey_Wigner}
   Let $A\subset G$ be a closed connected abelian normal subgroup and $X=G(\check c)$ a coadjoint orbit of $G$. Then there is a unique coadjoint orbit $Y$ of the stabilizer $H=G_c$ of $c = \check c\suba$\textup, namely $Y=H(\check c\subh)$\textup, such that
   \begin{equation}
      \label{Mackey_Wigner_formula}
      X=\Ind_H^GY
      \qquad\quad\text{and}\qquad\quad
      Y\suba = \{c\}.
   \end{equation}
   Moreover $Y$ is also the reduced space $\pi\inv(c)/A$ where $\pi$ is the projection $X\to\nobreak\LA^*$. In particular $A$ acts trivially on $Y$\textup, and if $A$ is a semidirect factor in $G$ then $Y$ is \textup(lifted from\textup) a coadjoint orbit of $H/A$.\qed
\end{theo}

\begin{exem}[Poincaré orbits]
   To take the case that gave the theory its name, consider the Poincaré group
   \begin{equation}
      G=\left\{g=
      \begin{pmatrix}
         L & C\\
         0 & 1
      \end{pmatrix}:
      \begin{array}{l}
         \rlap{$L$}\phantom C\in\mathbf{SO}(3,1)\o\\
         C\in\RR^{3,1}
      \end{array}
      \right\},
   \end{equation}
   a semidirect product of $A = \RR^{3,1}$ $(L=\bm1)$ with the Lorentz group $(C=0)$. Then $\LA^*$ identifies with $\RR^{3,1}$ where $G$ acts by $g(P) = LP$, and \eqref{Mackey_Wigner} classifies the coadjoint orbits $X$ of $G$ in terms of the possible orbits $X\suba$ and $Y$, thus:  
	\begin{enumerate}[\upshape(a)]
	   \item $X\suba$ is half a timelike hyperboloid and $Y$ is a coadjoint orbit of $\mathbf{SO}(3)$,
	   \item $X\suba$ is a half-cone and $Y$ is a coadjoint orbit of the euclidean group $\mathbf E(2)$,
	   \item $X\suba$ is a spacelike hyperboloid and $Y$ is a coadjoint orbit of $\mathbf{SL}(2,\RR)$,
	   \item $X\suba$ is the origin and $Y(=X)$ is a coadjoint orbit of the Lorentz group.
	\end{enumerate}
	This classification \cites{Souriau:1966a}[§14]{Souriau:1970} is of course completely parallel with the representation theory of $G$ as worked out by Wigner \cite[{}XII.8.17]{Fell:1988b}.
\end{exem}

\begin{remas}
   \label{Mackey_Wigner_remarks}
   (a) Theorem \eqref{Mackey_Wigner} can also be proved by applying the imprimitivity theorem \eqref{imprimitivity_theorem} directly to the (finite-dimensional!) system of imprimitivity $\LF=\{\<\cdot,Z\>:Z\in\LA\}$. Conversely, \eqref{imprimitivity_theorem} can \emph{formally} be regarded as an application of \eqref{Mackey_Wigner} to the (usually infinite-dimensional) group  $\F\rtimes G$ with normal abelian subgroup $\F$. This is in effect how \eqref{imprimitivity_theorem} was discovered.
   
   (b) The theorem holds as stated and proved not only when $A$ is abelian, but slightly more generally when $A$ is \emph{$X$-abelian} in the sense that $[\LA,\LA]$ lies in the ``extraneous'' ideal $\ann(X)=\Ker(Z\mapsto\<\cdot,Z\>)$.
   
   (c) In the context of \eqref{Mackey_Wigner}, it is useful to introduce the notation $\LE\orth = \ann(\LE(\check c))$ (cf.~(\ref{homogeneous_case}b)), in terms of which we have $\LA\subset\LA\orth$, $\LH=\LA\orth$, $\LH\orth\subset\LH$~and
   \begin{equation}
      \label{dimensions}
      \dim(G/H) = \dim(\LA(\check c)),\qquad\quad
      \dim(Y)=\dim(\LH/\LH\orth).
   \end{equation}
   (Indeed, one checks immediately that if $\LA$ is an ideal of a subalgebra $\LB$, then $\LA\orth\cap\LB$ is the stabilizer of $\check c\suba$ in $\LB$. Now apply this to the pairs $\LA, \LG$ and $\LH,\LH$.) Thus we see that $Y$ will be zero-dimensional just when $\LH$ is a \emph{polarization} ($\LH=\LH\orth$). At the other extreme, \eqref{Mackey_Wigner_formula} will boil down to $X=\Ind_G^GX$ just when $A$ acts trivially on $X$, i.e., when the $X$-abelian ideal $\LA$ is \emph{$X$-central} in the sense that $[\LG,\LA]\subset\ann(X)$.
\end{remas}

\section{Symplectic Kirillov-Bernat Theory}

In this section we assume that $G$ is an exponential Lie group. This means that $\exp:\LG\to G$ is a diffeomorphism, or equivalently \cite{Bernat:1972} that
\begin{enumerate}[(a)]
   \item $G$ is connected, simply connected, and solvable; and
   \item $\ad(Z)$ has no purely imaginary eigenvalues for $Z\in\LG$.
\end{enumerate}
Takenouchi \cite{Takenouchi:1957} proved that every irreducible unitary $G$-module is monomial, i.e.~induced from a character. The analog is that every coadjoint orbit is induced from a point-orbit:

\begin{theo}
   \label{Kirillov_Bernat}
   Let $X=G(\check c)$ be a coadjoint orbit of the exponential Lie group $G$. Then $G$ admits a closed connected subgroup $H$ such that
   \begin{equation}
      \label{monomial}
      X = \Ind_H^G\{\check c\subh\}.
   \end{equation}
   Its Lie algebra $\LH$ is a polarization satisfying Pukánszky's condition \textup{(\ref{homogeneous_case}d, \ref{homogeneous_remarks}b)}.
\end{theo}

\begin{proof}
   Takenouchi's key lemma \cite[§3]{Takenouchi:1957} ensures that $\LG/\ann(X)$ admits an abelian ideal which is not central. Taking its preimage in $\LG$, we obtain an $X$-abelian ideal $\LA$ of $\LG$ which is not $X$-central (\ref{Mackey_Wigner_remarks}b,c). Now Theorem \eqref{Mackey_Wigner} implies $X=\smash{\Ind_{G_1}^GX_1}$ where $G_1$ is the stabilizer of $\check c\suba$ and $X_1=G_1(\check c_{\smash{|}\LG_1})$. Moreover $G_1$ has smaller dimension than $G$ \eqref{dimensions} and is again exponential \cite[p.\,4]{Bernat:1972}. So we may iterate the process to obtain decreasing $G_i$ such~that
   \begin{equation}
      X = \Ind_{G_1}^G\cdots\Ind_{G_i}^{G_{i-1}}X_i=\Ind_{G_i}^GX_i
   \end{equation}
   where the dimension of $X_i=G_i(\check c_{\smash{|}\LG_i})$ decreases at each step (\ref{elementary}a, \ref{elementary}e). Ultimately we arrive at a point-orbit of $H=G_n$ say, such that \eqref{monomial} holds. Conversely if \eqref{monomial} holds, we have $\LH\subset\LH\orth$ since $\{\check c\subh\}$ is a point-orbit, and this inclusion is an equality by dimension (\ref{elementary}a). Moreover (\ref{elementary}b) says that $X$ is the only coadjoint orbit of $G$ such that $X\subh$ contains $\{\check c\subh\}$, and this means that $\check c+\ann(\LH)\subset X$.
\end{proof}

This proof can be regarded as a method to construct Pukánszky polarizations. Put algebraically it runs as follows:

\begin{algo}
   To obtain a Pukánszky polarization at $\check c$\textup, start with $\LG_0=\LG$. If $\LG_i\orth=\LG_i$\textup, we are done. Else\textup, pick an ideal $\LA_i$ of $\LG_i$ such that $\LA_i\subset\LA_i\orth\not\supset\LG_i$\textup, put $\LG_{i+1}=\LG_i\cap\LA_i\orth$\textup, and repeat.
\end{algo}

\begin{remas}
	(a) This method gives more polarizations than M.~Vergne's \cite[p.\,87]{Bernat:1972}. For example it gives the $\LH$ of \cite[p.\,88]{Bernat:1972}: in the notation there, choose $\LA_0=\operatorname{span}(x_3,x_5)$ and then $\LA_1=\operatorname{span}(x_1,x_3,x_5)(=\LH)$.
	
	(b) This method still doesn't give all Pukánszky polarizations. For example it misses the $\LB_1$ of \cite[p.\,313]{Benoist:1990a}: indeed, the latter contains no noncentral ideal (``$\LB_1\cap C_2(\LG)\subset C_1(\LG)$''), whereas our $\LH$ always contains $\LA_0$. In fact, taking orthogonals in the relation $\LH\subset\LG_{i+1}\subset\LA_i\orth$ shows that we always have $\LA_i\subset\LH\subset\LA_i\orth$ and $\LG_i\orth\subset\LH\subset\LG_i$ for all $i$.
\end{remas}

\section{Symplectic Parabolic Induction}

Mackey's normal subgroup analysis is no help in studying simple groups, which lack nontrivial normal subgroups, nor more generally semisimple or reductive groups. Yet, as \cite[§§15-16]{Mackey:1952} observed at once, induction does play a key role in their representation theory.

This can be translated geometrically. Following Vogan \cite{Vogan:2000a}, let us call $G$ \emph{reductive} if there is a homomorphism $\eta:G\to\mathbf{GL}(n,\RR)$ with finite kernel and $\Theta$-stable image, where $\Theta(g)={}^tg\inv$. Then $\LG$ identifies with a Lie algebra of matrices, $\LG^*$ identifies with $\LG$ by means of the trace form $\<Z,Z'\> = \Tr(ZZ')$, and every $x\in\LG^*$ has a unique Jordan decomposition
\begin{equation}
   x = x_{\,\textup h} + x_{\,\textup e} + x_{\,\textup n},
   \qquad\qquad
   x_{\,\textup h}, x_{\,\textup e}, x_{\,\textup n}\in\LG^*,
\end{equation}
where the matrix $x_{\,\textup h}$ is hyperbolic (diagonalizable with real eigenvalues), $x_{\,\textup e}$ elliptic (diagonalizable with imaginary eigenvalues) and $x_{\,\textup n}$ nilpotent, and $x_{\,\textup h}$, $x_{\,\textup e}$ and $x_{\,\textup n}$ commute. (For all this, see \cite[§2]{Vogan:2000a}.)

With that said, we have:

\begin{theo}
   \label{parabolic_induction}
   Let $X=G(x)$ be a coadjoint orbit of the reductive Lie group $G$. Let $\LU$ be the sum of eigenspaces belonging to positive eigenvalues of $\ad(x_{\,\textup h})$ and $U=\exp(\LU)$. Then $Q=G_{x_{\,\textup h}}U$ is a closed subgroup of $G$\textup, and one has
   \begin{equation}
      \label{Ind_Q^GY}
      X=\Ind_Q^GY
      \qquad\text{where}\qquad
      Y = Q(x_{\smash{|}\LQ}).
   \end{equation}
   Moreover $Y\subu=\{0\}$\textup, so $Y$ comes from an orbit of the quotient $G_{x_{\,\textup h}}=Q/U$.
\end{theo}

\begin{proof}
   The fact that $Q$ is closed and the semidirect product of $G_{x_{\,\textup h}}$ and $U$ is in \cite[Prop.~2.15]{Vogan:2000a}. So \eqref{Ind_Q^GY} will result from \eqref{homogeneous_case} and the imprimitivity theorem \eqref{imprimitivity_theorem}, if we show that $Q$ satisfies the following three relations:
   \begin{flalign}
      \stepcounter{equation}
      \label{Q_contains_G_x}
      &Q \text{ contains the stabilizer } G_x,&&\tag{\theequation a}\\[-.5ex]
      \label{Q_coisotropic}
      &\LU(x)=\ann(\LQ),&&\tag{\theequation b}\\[-.5ex]
      \label{Harish_Chandra}
      &U(x) = x + \ann(\LQ)
      \qquad\text{(cf.~Harish-Chandra \cite[Lemma~8]{Harish-Chandra:1954}).}&& \tag{\theequation c}
   \end{flalign}
   The first relation is clear: if $g\in G_x$, then $\eta(g)$ commutes with $x$ and~hence with $x_{\,\textup h}$ by a well-known property of the Jordan decomposition; so $G_x\!\subset G_{x_{\,\textup h}}$. Next, write $\LG^a\subset\LG$ for the eigenspace belonging to eigenvalue $a$ of $\ad(x_{\,\textup h})$. Then we have $\LG=\bigoplus_{a\in\RR}\LG^a$ and  $\LU=\bigoplus_{a>0}\LG^a$. The Jacobi identity gives
   \begin{equation}
      \label{Jacobi}
      [\LG^a,\LG^b]\subset\LG^{a+b},
   \end{equation}
   and invariance of the trace form implies $0=\<[x_{\,\textup h},Z],Z'\>+\<Z,[x_{\,\textup h},Z']\>=(a+b)\<Z,Z'\>$ for all $(Z,Z')\in\LG^a\x\LG^b$. Hence
   \begin{equation}
      \label{invariance_trace}
      \<\cdot,\cdot\>_{|\LG^a\x\LG^b}
      \quad\text{is}\quad
      \begin{cases}
         \text{zero} & \text{if } a+b\ne0,\\
         \text{non-degenerate} & \text{if } a+b=0.
      \end{cases}
   \end{equation}
   Now \eqref{Jacobi} shows that $\LQ=\LG^0\oplus\LU$, and \eqref{invariance_trace} that $\ann(\LQ)=\LU$. So \eqref{Q_coisotropic} comes down to seeing that $\LU(x)=\LU$, or in other words, that $\ad(x)$ maps $\LU$ onto $\LU$. And indeed it maps $\LU$ \emph{into} $\LU$ by \eqref{Jacobi} since $x\in\LG^0$, and \emph{onto} $\LU$ since $\Ker(\ad(x))=\LG_x\subset\LG_{x_{\,\textup h}}=\LG^0$. From \eqref{Q_coisotropic} we deduce first that $\LU$ and hence $U$ stabilize the projection $x_{\smash{|}\LQ}$, so that $U(x)\subset x+\ann(\LQ)$, and second that $U(x)$ is open in $x+\ann(\LQ)$. But $U$ is nilpotent \eqref{Jacobi}; so $U(x)$ is also closed \cite[p.\,7]{Bernat:1972}, and \eqref{Harish_Chandra} is proved.
   
   As to the theorem's last assertion, it suffices to note that $\smash{Y\subu}$ is the $Q$-orbit of $x\subu$ in $\LU^*$, and that $x\subu=0$ by \eqref{invariance_trace} since $x\in\LG^0$.
\end{proof}

\begin{rema}
   $Q$ is a parabolic subgroup of $G$ with Levi factor $G_{x_{\,\textup h}}$. One should not confuse induction in the sense \eqref{Ind_Q^GY} with Lusztig-Spaltenstein induction \cites{Lusztig:1979}[Chap.~7]{Collingwood:1993}, which uses a smaller parabolic and is tailored for nilpotent orbits.
\end{rema}

\setlength{\labelalphawidth}{3.2em}

\let\l\polishl


\printbibliography

\end{document}